\documentclass[10pt]{article}
\usepackage{cmap}
\usepackage[pdftex]{graphicx}
\usepackage{epsfig}
\usepackage{latexsym}
\usepackage{epigraph}
\usepackage{mathtools}
\usepackage{setspace}
\usepackage[russian]{datetime2}
\usepackage{enumitem}
\usepackage{amsfonts,amssymb,mathrsfs,amscd,amsmath,amsthm}
\usepackage{verbatim}
\usepackage[mathscr]{eucal}
\usepackage{fixmath}
\ifpdf
\usepackage{hyperref}
\else
\usepackage[dvipdfmx]{hyperref}
\fi
\usepackage[usenames,dvipsnames,svgnames,table]{xcolor}
\usepackage[all]{xy}

\def\thtext#1{
  \catcode`@=11
  \gdef\@thmcountersep{. #1}
  \catcode`@=12
}

\def\threst{
  \catcode`@=11
  \gdef\@thmcountersep{.}
  \catcode`@=12
}

\newcounter{nxt}

\theoremstyle{plain}
\newtheorem{thm}{Theorem}
\newtheorem*{thh}{Theorem}
\newtheorem{prop}[thm]{Proposition}
\newtheorem{ass}[thm]{Assertion}

\newtheorem{lem}[thm]{Lemms}

\theoremstyle{definition}
\newtheorem{examp}[thm]{Example}
\newtheorem{rk}[thm]{Remark}

\def\one{\mathbf{1}}
\def\bb{\boldsymbol{b}}
\def\bp{\boldsymbol{p}}
\def\bv{\boldsymbol{v}}
\def\bw{\boldsymbol{w}}
\def\bx{\boldsymbol{x}}
\def\by{\boldsymbol{y}}

\def\:{\colon}

\def\a{\alpha}
\def\b{\beta}
\def\g{\gamma}
\def\om{\omega}

\def\conv{\operatorname{conv}}

\def\hB{{\widehat{B}}}
\def\hT{{\widehat{T}}}

\newcommand{\R}{{\mathbb R}}
\newcommand{\Z}{{\mathbb Z}}

\begin{document}
\title{Minimal Fillings of Finite Metric Spaces and Convex Polyhedra}
\author{A.\,O.~Ivanov,  A.\,A.~Tuzhilin}

\abstract{Problem of minimal parametric generalized fillings of finite metric space $M$ (a version of optimal connection problem) leads to construction of a convex multidimensional polyhedra $W_G$ for each tree $G$ connecting $M$. The weight of the parametric minimal filling can be found as maximum on $W_G$ of the special linear function corresponding to the distance vector of $M$. It is proved that the union of vertex sets of the polyhedra $W_G$ over all possible $G$ forms an extremal subset, i.e., coincides with the vertex set of its convex hull.}
\date{}

\maketitle

\section{Introduction}
The problem of minimal filling of a finite metric space was posed by A.~Ivanov and A.~Tuzhilin~\cite{IT_MSb} as a generalization of Steiner Problem on shortest networks (see, for example~\cite{IT_UMN}), and of M.~Gromov minimal filling problem for Riemannian manifolds~\cite{Gromov}. Let us recall the necessary definitions and results.

Let $M$ be a finite metric space, and $G=(V,E)$ be a connected graph connecting $M$, that is, $M\subset V$. Let $\om$ be a nonnegative weight function defined on the edges of $G$. It generates a pseudometric $\rho_\om$ on the set of vertices of the graph as follows: $\rho_\om(x,y)=\min\om(\g_{xy})$, where the minimum is taken over all paths $\g_{xy}$ connecting vertices $x$ and $y$, and the weight $\om(\g_{xy})$ of a path $\g_{xy}$ is the sum of the weights of all the edges of this path. A graph $G$ with weight function $\om$ is called a \emph{filling of the metric space $M$} if for any points $m$, $m'$ in $M$ the inequality $|mm'|\le\rho_\om(m,m')$ holds, that is, the distance along $G$ is not less than the original distance in $M$. The graph $G$ is called a \emph{filling type}.

The \emph{weight of a minimal filling of a metric space $M$} is defined as $\inf \om(G)$, where the infimum is taken over all fillings of $M$, that is, over all $G$ and $\om$, and the weight $\om(G)$ of a graph $G$ is the sum of the weights of all its edges. A filling  which this infimum is achieved at is called a \emph{minimal filling of a finite metric space $M$}. If we fix the type of a filling, that is, a connected graph $G$ connecting $M$, and take the infimum only over the weight functions $\om$ that transform $G$ into a filling of $M$, we obtain\emph{ parametric minimal fillings of a given type $G$} and their \emph{weights}.

A.~Ivanov and A.~Tuzhilin~\cite{IT_MSb} proved theorems on the existence of a minimal parametric filling for any type $G$ and a minimal filling, and described a number of their properties. In particular, it was shown that as a minimal filling of the space $M$ one can always choose a tree whose vertex degrees are equal to $1$ or $3$, where the set $M$ coincides with the set of vertices of degree $1$ of this tree (such trees will be called \emph{binary}). It was also proved that the length of the shortest tree connecting a finite subset $M$ of points of the metric space $(X,\rho)$ is bounded from below by the weight of the minimal filling of the metric space $(M,\rho)$. This bound is attained, in particular, as Z.~Ovsyannikov~\cite{Ovs} noted, equality holds for $M\subset\R^n$ with the metric generated by the $\max$-norm. B.~Bednov and P.~Borodin~\cite{BB} described all Banach spaces in which the shortest tree for any finite subset is a minimal filling.

To calculate the weight of minimal fillings, it turns out to be useful to consider trees with arbitrary (not necessarily non-negative) weights; see~\cite{IOST}. Such fillings are called \emph{generalized}. It is shown in~\cite{IOST} that although the weight of a generalized minimal parametric filling of type $G$ may be less than the weight of a ordinary minimal parametric filling of the same type $G$, the weight of a minimal filling is the same as the weight of a generalized minimal filling.

A.~Ivanov and A.~Tuzhilin~\cite{IT_MSb} wrote down formulas for the weight of minimal parametric fillings for metric spaces consisting of $3$ and $4$ points. It turned out that the weight of a filling in this case is equal to half the weight of some Hamiltonian cycle in the complete graph $K(M)$ on the vertex set $M$. Later, A.~Eremin~\cite{Eremin} obtained a general formula for the weight of a minimal parametric generalized filling of type $G$ of a space $M$ in terms of so-called multi-tours of the tree $G$. A \emph{multi-tour of multiplicity $k$ of a tree $G$} is a cyclic route $\mu$ in the complete graph $K(M)$ that passes through each vertex exactly $k$ times and is consistent with $G$ in the following sense: for each partition of the set $M$ generated by an edge of $G$, exactly $2k$ edges of the route $\mu$ connect vertices from different subsets of this partition. The  \emph{semiperimeter $|\mu|$} of a multi-tour $\mu$ of multiplicity $k$ is the sum of the lengths of all its edges (taking into account multiplicities) divided by $2k$. According to A.~Eremnin's formula, the weight of a minimal parametric generalized filling of type $G$ is $\sup|\mu|$, where the least upper bound is taken over all multi-tours of $G$. A.~Eremin also showed that it is sufficient to consider only the so-called \emph{irreducible\/} multi-tours, the possible multiplicity of which was roughly estimated from above, see~\cite{Eremin}. Thus, instead of the least upper bound in the weight formula, one can take the maximum over a finite (possibly very large) set of irreducible multi-tours.

The minimum parametric filling problem can be formulated as a linear programming problem. Indeed, see~\cite{IT_MSb}, the conditions on the filling weight function are linear inequalities, and the weight of the minimum parametric filling is the minimum of the linear function under these linear constraints. As a result of A.~Ivanov and A.~Tuzhilin's application of the duality principle to this problem, see~\cite{IT_LP}, the concept of multi-tour obtained a geometric interpretation. It was observed that, upon transition to the dual linear programming problem, the set of admissible values of the variables ceases  to depend on the metric of the space $M$ and is completely determined by the structure of the tree $G$. Thus, to each binary tree $G$ connecting $M$, there is associated a convex polytope $W_G$, and the weight of the minimal parametric generalized filling of type $G$ is equal to the maximum of the linear function $f_M$ on this polytope (the coefficients of the function $f_M$ are distances in the space $M$). This maximum is obviously attained at one of the vertices of the polytope $W_G$, so the weight of the minimal parametric generalized filling is equal to the maximum of the function $f_M$ on the vertex set of the polytope $W_G$. In~\cite{IT_LP} it is shown that the rational points of the polytope $W_G$ correspond to multi-tours of the tree $G$, an estimate is obtained for the multiplicity of multi-tours corresponding to vertices that improves the estimate in~\cite{Eremin}, and explicit formulas are written out for the weight of minimal parametric generalized fillings of spaces with the number of points at most $6$. Note that for $5$-point spaces the weight formula was obtained earlier by B.~Bednov~\cite{BBed} from different considerations. O.~Shcherbakov~\cite{Shch1} noticed that the vertices of the polyhedron $W_G$ correspond exactly to irreducible multi-tours, thus the weight formulas from~\cite{Eremin} and~\cite{IT_LP} are equivalent: in the first, the maximum is taken over the set of irreducible multi-tours of the tree $G$, which are determined by combinatorial and algebraic methods, in the second --- over the set of vertices of the multidimensional convex polyhedron $W_G$, but a natural one-to-one correspondence is established between these two sets.

The complexity of a binary tree structure can be characterized by the number of so-called \emph{moustaches\/} --- pairs of boundary vertices with a common adjacent vertex. O.~Shcherbakov~\cite{Shch1} obtained a complete description of irreducible multi-tours and polytopes $W_G$ in the case where $G$ has two moustaches (the so-called ``snake'' type binary trees), as well as for trees with three moustaches (jointly with A.~Ivanov, see~\cite{IvShch} and~\cite{Shch2}). It was shown that for trees with two moustaches, all irreducible multi-tours have multiplicity $1$, and for trees with three moustaches, the multiplicity of a multi-tours does not exceed $2$. Moreover, in the latter case, every such tree with $6$ or more vertices of degree $1$ has an irreducible multi-tour of multiplicity $2$.

To find a minimum filling, one must, generally speaking, enumerate all possible types of trees $G$ connecting $M$. In~\cite{IT_LP} it was verified (using a computer) that for a fixed $n$, $n\le6$, the union of the vertex sets of all polytopes $W_G$, where the union is taken over all trees $G$ connecting an $n$-point $M$, itself forms the vertex set of some convex polyhedron. O.~Shcherbakov~\cite{Shch1} noted that this is also true for any $n$, if we restrict ourselves to all possible trees of the ``snake'' type (in this case, the vertices of each $W_G$ are vertices of the same multidimensional cube). The main result of this paper is the following.

\begin{thh}\label{thm:all_conv}
Consider the family of all binary trees connecting a finite set $M$, $|M|=n$. For each such tree $G$, construct a polyhedron $W_G$, denote by $V_G\subset\R^N$ the set of its vertices, and let $V=\cup_GV_G\subset\R^N$. Then the set $V$ is extremal, that is, $V$ is the set of vertices of its convex hull $\conv V\subset\R^N$.
\end{thh}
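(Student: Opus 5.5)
The plan is to show that every $v\in V$ is \emph{exposed} in $\conv V$. That is, for a vertex $v$ of some $W_G$ we will build a linear functional on $\R^N$ whose maximum over the finite set $V$ is attained only at $v$. Recall the description of $W_G$ coming from the dual LP. The coordinates $x_{mm'}\ge0$ are indexed by the $N=n(n-1)/2$ pairs of points of $M$. For every edge $e$ of $G$, splitting $M=A\sqcup B$, there is the equality
$$\varphi_{A}(x):=\sum_{a\in A,\ b\in B}x_{ab}=1.$$
Thus $W_G$ is cut out of the nonnegative orthant by the equations $\varphi_A=1$, with $A\,|\,B$ running over the splits of $M$ induced by edges of $G$. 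Since $v$ is a vertex of $W_G$, there is $c\in\R^N$ such that $v$ is the unique maximizer of $c\cdot x$ on $W_G$. I will keep $c$ and add a penalty that singles out $G$ among all trees.

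The key inequality I would establish first is the following: for every binary tree $G'$ connecting $M$, every $x\in W_{G'}$, and every split $A\,|\,B$ of $M$, one has $\varphi_A(x)\le1$. By LP duality (the setup of~\cite{IT_LP}), $\max_{W_{G'}}\varphi_A$ equals the weight of a minimal parametric generalized filling of type $G'$ for the cut pseudometric $\delta_A$. Here $\delta_A(a,b)=1$ if $a,b$ lie on different sides of the split, and $0$ otherwise. So it suffices to exhibit, for every $G'$, a weight function $\omega$ on $G'$, possibly with negative values, of total weight $1$ such that $\rho_\omega\ge\delta_A$ on $M$. Constructing such an $\omega$ is the step I expect to be the main obstacle. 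I would try to construct it inductively, by removing a moustache of $G'$. Failing that, I would derive $\varphi_A\le 1$ from the multi-tour interpretation: bound the number of crossings of the cut $A\,|\,B$ by a multi-tour of multiplicity $k$ by the $2k$ crossings of a suitable edge of $G'$, or by averaging over the edges of $G'$. A closely related fallback is to exploit the leaf equations $\sum_{m'}x_{mm'}=1$, which are common to all trees.

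Granting this inequality, define
$$F_\lambda(x)=c\cdot x+\lambda\sum_{S}\varphi_S(x),$$
where $S$ runs over the $2n-3$ splits of $G$ (equivalently, over its edges). On $W_G$ the sum is identically $2n-3$, so on $V_G$ the functional $F_\lambda$ is uniquely maximized at $v$. Now take $u\in V\setminus V_G$. There are two cases.

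\begin{itemize}
\item If $\varphi_S(u)=1$ for all splits $S$ of $G$, then $u$ satisfies all defining relations of $W_G$. Hence $u\in W_G$ and $u\ne v$, so $c\cdot u<c\cdot v$.
\item Otherwise, by the key inequality, $\sum_S\varphi_S(u)<2n-3$.
\end{itemize}

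Since $V$ is finite (each $W_G$ is a polytope and there are finitely many trees), the second case gives a positive minimal deficit. Choosing $\lambda$ larger than $\max_{u\in V}|c\cdot u-c\cdot v|$ divided by this deficit therefore makes $F_\lambda(u)<F_\lambda(v)$ for every $u\in V\setminus\{v\}$. Thus $v\notin\conv(V\setminus\{v\})$, i.e.\ $V$ is extremal. Apart from the key inequality, all of this is routine.
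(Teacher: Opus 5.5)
\textbf{The key inequality is false.} It points the wrong way as soon as $n\ge4$. Take $M=\{m_1,m_2,m_3,m_4\}$ and let $G'$ have moustaches $\{m_1,m_2\}$ and $\{m_3,m_4\}$. Solving the five equations shows that $W_{G'}$ is the segment
$x^{12}=x^{34}=\frac12$, $x^{13}=x^{24}=p$, $x^{14}=x^{23}=\frac12-p$, with $0\le p\le\frac12$.
At the vertex $p=0$ (the tour $m_1m_2m_3m_4$), take the split $A=\{m_1,m_3\}$, $B=\{m_2,m_4\}$. Then $\varphi_A=x^{12}+x^{14}+x^{23}+x^{34}=2$, and along the whole segment $\varphi_A=2-2p\ge1$.

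In your dual formulation the same failure is visible directly. The four paths of $G'$ joining $A$ to $B$ cover each of the five edges exactly twice. Hence every $\omega$ with $\rho_\omega\ge\delta_A$ has $\omega(G')\ge2$, and the weight-$1$ filling you want to construct does not exist. The step you flagged as the main obstacle cannot be completed by any of your routes: a crossing-count bound, averaging, or the leaf equations cannot prove a false claim.

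\textbf{What is true, and how to repair the argument.} The reverse inequality holds: $\varphi_A\ge1$ on every $W_{G'}$, for every split $A\,|\,B$ of $M$. This is exactly the paper's inclusion $W_{G'}\subset T_n=\{\bx\ge0,\ B\bx\ge\one_C\}$. The paper proves it by writing a vertex as $\frac1{2k}\bw^\sigma$ for a multi-tour $\sigma$ of multiplicity $k$ and showing that $\sigma$ crosses $A\,|\,B$ at least $2k$ times. If some moustache lies on one side of the cut, delete it and induct. If every moustache is split by the cut, each moustache pair is joined by $k$ boundary paths, and there are at least two disjoint moustaches.

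With this inequality your scheme works once the penalty is \emph{subtracted}. Set $F_\lambda(\bx)=c\cdot\bx-\lambda\sum_S\varphi_S(\bx)$. Then $\sum_S\varphi_S(u)\ge2n-3$ for all $u\in V$, with equality if and only if $u\in W_G$, and your finiteness argument goes through verbatim.

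\textbf{Comparison with the paper.} Repaired this way, your route differs from the paper's only in the second step. The paper shows, via slack variables and basic feasible solutions, that every vertex of $W_G$ is a vertex of $T_n$, and then uses $\conv V\subset T_n$. You instead expose $v$ in $\conv V$ directly. In fact your functional $\sum_S\varphi_S$ shows that $W_G$ is the face of $T_n$ on which the valid inequalities $\varphi_S\ge1$ ($S$ a split of $G$) are tight. That gives the paper's stronger statement, that vertices of $W_G$ are vertices of $T_n$, in one line.
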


\section{Preliminaries}
This section presents the necessary results of the theory of minimal fillings of finite metric spaces, Graph Theory and Linear Programming.

\subsection{Graphs, Metric Spaces and Fillings}
A graph $G=(V,E)$ is a pair consisting of a finite set $V$, whose elements are called \emph{vertices}, and a set $E$ of two-element subsets of $V$, which are called \emph{edges}. An edge $e=\{u,v\}$ is said to \emph{connect\/} vertices $u$ and $v$, vertices $u$ and $v$ in this case are called \emph{adjacent}, and an edge $e$ and a vertex $u$ or $v$ are \emph{incident}. The number of vertices adjacent to a fixed vertex $v$ is called its \emph{degree}. A graph in which any two vertices are adjacent is called \emph{complete\/} and is denoted by $K(V)$ or $K_n(V)$, where $n$ is the number of vertices. A \emph{route\/} in a graph $G$ connecting vertices $u$ and $v$ of the graph is a sequence $\g$ of the form $u=v_{i_0},\ldots,v_{i_k}=v$ of pairwise adjacent vertices. The \emph{edges of the route $\g$} are edges of the form $\{v_{i_p},v_{i_{p+1}}\}$. A route is called a \emph{chain\/} if all its edges are pairwise distinct, and a \emph{path\/} if all its vertices are pairwise distinct. A chain whose first and last vertices coincide is called a \emph{cycle}.

A function $\om\:E\to\R$ defined on a set of graph edges is called a \emph{weight\/} function, and its value $\om(e)$ on an edge $e$ is the \emph{edge weight}. The sum of the weights of all edges of a graph $G$ is called the \emph{graph weight\/} and is denoted by $\om(G)$. The sum of the weights of all edges of a route (including multiplicities) is the \emph{route weight}.

A graph is called \emph{connected\/} if any two of its distinct vertices can be connected by some route. A connected graph without cycles is called a \emph{tree}. We say that a connected graph $G=(V,E)$ \emph{connects a set $M$} if $M\subset V$. In this context, the set $M$ is called a \emph{boundary\/} of $G$, and the vertices of $M$ are called the \emph{boundary vertices}. A tree connecting $M$ will be called \emph{binary\/} if the degrees of its vertices are equal to $1$ or $3$, and the set of vertices of degree $1$ coincides with $M$.

Consider an arbitrary tree $G=(V,E)$ with boundary $M$. The graph $G=\big(V,E\setminus\{e\}\big)$, obtained from $G$ by removing edge $e$, is the union of two disjoint trees $G_1$ and $G_2$. The subsets $M_k=G_k\cup M$, $k=1,\,2$, are disjoint and form a partition of the set $M$, $M=M_1\sqcup M_2$, which we denote by $p(e)$.

A set $M$ on which a non-negative, symmetric function $M\times M\to\R$, $(m,m')\mapsto|mm'|$, is defined, satisfying the triangle inequality $|mm'|+|m'm''|\ge|mm''|$, is called a \emph{pseudometric space}, and the function itself is called a \emph{pseudometric on $M$}. If $|mm'|=0$, if and only if $m=m'|$, then we speak of a \emph{metric\/} and a \emph{metric space}.

\begin{examp}
Consider a connected graph $G=(V,E)$ with a nonnegative weight function $\om\:E\to[0,\infty)$. Then the function $d_\om\:V\times V\to\R$, defined by the formula $\rho_\om(u,v)=\min\om(\g_{uv})$, where the minimum is taken over all possible paths in $G$ connecting $u$ and $v$ if $u\neq v$, and $\rho_\om(u,u)=0$, is a pseudometric on $V$.
\end{examp}

Let $M$ be a finite metric space, $G$ be a connected graph connecting $M$, and $\om\:E\to[0,\infty)$ be a nonnegative weight function. A graph $G$ with weight function $\om$ is called a \emph{filling of the space $M$} if $|mm'|\le\rho_\om(m,m')$. The graph $G$ is called the \emph{type\/} of the filling.

\begin{examp}\label{example:net_fill}
Consider an arbitrary metric space $X$ with metric $\rho$, a finite subset $M$ of it, and a connected graph $G$ connecting $M$, and let $V\subset X$. Then, on the edges of $G$, a weight function $\om_\rho\:\{v,v'\}\mapsto\rho(v,v')$ is defined naturally. The graph $G$ with weight function $\om_\rho$ is a filling of the metric space $(M,\rho)$.
\end{examp}

The \emph{weight of a minimal filling of a metric space $M$} is defined as $\inf \om(G)$, where the infimum is taken over all fillings of $M$, that is, over all connected graphs $G$ connecting $M$ and over all weight functions $\om$ that transform $G$ into a filling of $M$. A filling at which this infimum is attained is called a \emph{minimal filling}. If we fix the type of filling, that is, a connected graph $G$ connecting $M$, and take the greatest lower bound only over the weight functions $\om$ that turn $G$ into a filling of $M$, the resulting value is called the \emph{weight of the minimal parametric filling of type $G$} of $M$, and if this infimum is attained at the weight function $\om_0$, then the graph $G$ with this weight function is called the \emph{minimal parametric filling of type $G$} of $M$.

As mentioned in Introduction, these concepts were introduced in the paper~\cite{IT_MSb}, which proved the existence theorem for a minimal parametric filling of any type and the existence theorem for a minimal filling for an arbitrary finite metric space. A connection was also established between the problems of minimal filling and shortest trees (Steiner problem). As we have already seen in example~\ref{example:net_fill}, every connected graph connecting a finite subset $M$ of a metric space $(X,\rho)$ with vertices in $X$ defines a filling of $(M,\rho)$; in particular, a shortest tree for $M$ in $X$ is a candidate for a minimal filling of $M$. Consider now a finite metric space $M$ and an arbitrary isometric embedding $f\:M\to X$ of it into an arbitrary metric space $X$. Let $s(M,f,X)$ denote the length of a shortest tree for a set $f(M)$ in a space $X$. In~\cite{IT_MSb} it is shown that the weight of the minimal filling of $M$ is equal to $\inf\big\{r\in\R: s(M,f,X)\le r\big\}$, where the greatest lower bound is taken over all non-negative numbers $r$ for which there exists an isometric embedding $f\:M\to X$ in some metric space $X$ and the inequality $s(M,f,X)\le r$ holds. Therefore, the weight of the minimal filling for $M$ provides a lower bound for the length of a shortest tree with boundary $M$. Moreover, this bound is attained, for example, on finite subsets of $\R^n$ with the metric generated by the $\max$-norm.

It is also shown in~\cite{IT_MSb} that, to find a minimal filling, it is sufficient to consider only fillings whose types are essentially binary trees. Furthermore, if arbitrary trees are allowed, then it is sufficient to consider only strictly positive weight functions.

\subsection{Generalized Fillings, Multi-tours and Weight Formula}
To study fillings whose types are trees, it turns out to be useful to allow the weight function to take arbitrary values, see~\cite{IOST}. Specifically, let $M$ be a finite metric space, $G$ a connected graph connecting $M$, and $\om\:E\to\R$ an arbitrary weight function. A graph $G$ with weight function $\om$ is called a \emph{generalized\/} filling of a space $M$ of type $G$ if $|mm'|\le\rho_\om(m,m')$. Repeating the definitions from the previous section verbatim, we obtain the concepts of a \emph{minimal parametric generalized filling\/} and a \emph{minimal generalized filling\/} and their \emph{weights}. In the work~\cite{IOST} it was shown that although the weight of a minimal parametric generalized filling of a fixed type may be less than the weight of a minimal parametric filling of the same type, the weights of the minimal filling and the generalized minimal filling coincide, and a generalized minimal filling is also a minimal filling.

The transition to generalized fillings allowed us to obtain a general formula for the weight of a minimal filling; see~\cite{Eremin}. Let us recall the necessary definitions. A \emph{multi-cyclic order of multiplicity $k$\/} on a set $M$ is a mapping $\pi\:\Z_{nk}\to M$ such that
\begin{enumerate}
\item $\pi(j)\ne\pi(j+1)$ for any $j\in\Z_{nk}$;
\item The $\pi$-preimage of each element consists of exactly $k$ elements.
\end{enumerate}

Every multicyclic order $\pi$ generates a cyclic route $c_\pi=\pi(0),\pi(1),\ldots,\pi(nk-1),\pi(0)$ in the complete graph $K_n(M)$, consisting of $nk$ edges. By definition, the route $c_\pi$ traverses each vertex of the graph $K_n(M)$ exactly $k$ times. Let the numbering of points in $M$ be fixed, $M=\{m_1,\ldots,m_n\}$. Consider the linear space $\R^N$, $N=n(n-1)/2$, and numerate the components of vectors in it by pairs $\{i,j\}$, $1\le i,\, j\le n$, that is, in fact, by edges $\{m_i,m_j\}$ of the complete graph $K_n(M)$. We construct the vector $\bw^{\pi}=(w^{12},\ldots, w^{(n-1)n})\in \R^N$ by setting the component $w^{ij}$ equal to the number of occurrences of the edge $\{m_i,m_j\}$ in the route $c_\pi$. The resulting vector $\bw^{\pi}$ is called the \emph{vector of the multi-cyclic order $\pi$}.

A multi-cyclic order of multiplicity $k$ of a set $M$ is called a \emph{multi-tourl of a binary tree $G$ with boundary $M$} if for each of its edges $e$, there exist exactly $2k$ elements $p\in\Z_{nk}$ such that $\pi(p)$ and $\pi(p+1)$ lie in different components of the partition $p(e)$.

\emph{The semiperimeter of a multi-tour $\pi$} is defined as
$$
|\pi|_M=\frac1{2k}\sum_{j\in\Z_{nk}}\big|\pi(j)\pi(j+1)\big|,
$$
where $k$ is the multiplicity of the multi-tour $\pi$. A.~Eremin showed that the weight of a minimal parametric generalized filling can be calculated in terms of semiperimeters as follows, see~\cite[Theorem~4.1]{Eremin}.

\begin{ass}\label{ass:Eremin}
Let $G$ be a binary tree connecting a finite metric space $M$. The weight of the minimal parametric generalized filling of type $G$ of $M$ is equal to
$
\max_\pi|\pi|_M,
$
where the maximum is taken over all possible multi-tours of $G$.
\end{ass}

As above, denote by $\g_{xy}$ the unique path in $G$ connecting its vertices $x$ and $y$. Each multi-tour $\sigma$ generates a family of paths $\g_{\sigma(p)\sigma(p+1)}$, $p\in\Z_{nk}$, in $G$. We call such paths \emph{boundary paths of the multi tour $\sigma$}. The following result holds; see~\cite[Proposition 2.4]{Eremin}.

\begin{prop}\label{prop:mult_tour_edges}
Let $G$ be a binary tree with boundary $M$, $|M|=n$. A multi-cyclic order $\pi$ of multiplicity $k$ on $M$ is a multi-tour of $G$ if and only if for each of its edges $e$, for exactly $2k$ values $p\in\Z_{nk}$, the boundary path $\g_{\pi(p),\pi(p+1)}$ passes through $e$.
\end{prop}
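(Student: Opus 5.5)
The plan is to show that, for each fixed edge $e$ of $G$ and each fixed $p\in\Z_{nk}$, the two conditions being compared say the same thing about $p$. Namely, the boundary path $\g_{\pi(p),\pi(p+1)}$ passes through $e$ if and only if $\pi(p)$ and $\pi(p+1)$ lie in different parts of the partition $p(e)=M_1\sqcup M_2$. Granting this, for every edge $e$ the set of indices $p$ counted in the definition of a multi-tour coincides with the set counted in the statement. So one set has exactly $2k$ elements if and only if the other does, and the equivalence follows edge by edge.

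To prove the pointwise claim, fix $e=\{u,v\}$. Removing $e$ splits $G$ into two disjoint subtrees $G_1\ni u$ and $G_2\ni v$, with $M_i=M\cap G_i$. (The paper writes $G_k\cup M$, which I read as a typo for $\cap$.) Let $x=\pi(p)$ and $y=\pi(p+1)$, and recall $x\neq y$ by condition~1 of a multi-cyclic order. There are two cases.

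\emph{Same part.} Suppose $x,y\in M_1$, say. Since $G_1$ is connected, there is a path from $x$ to $y$ inside $G_1$, and it does not use $e$. Because $G$ is a tree, the path between $x$ and $y$ is unique, so this path is $\g_{xy}$, and hence $\g_{xy}$ avoids $e$. The case $x,y\in M_2$ is identical.

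\emph{Different parts.} Suppose $x\in M_1$ and $y\in M_2$. If $\g_{xy}$ avoided $e$, it would be a path in the graph $\big(V,E\setminus\{e\}\big)$ joining vertices in two different connected components, which is impossible. Hence $\g_{xy}$ passes through $e$. Moreover, since $\g_{xy}$ is a path and so uses no edge twice, it traverses $e$ exactly once. This matters if passages are counted with multiplicity, although the statement only asks which indices $p$ have a path through $e$.

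I do not expect any real obstacle. The only points needing care are the uniqueness of paths in a tree, which is what allows the explicit path in $G_1$ to be identified with $\g_{xy}$, and the fact that the components of $G$ minus $e$ induce exactly the partition $p(e)$. Both follow directly from the definitions in the preliminaries.
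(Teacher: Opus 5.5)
Your proof is correct and complete. In a tree, the unique path $\g_{xy}$ contains the edge $e$ if and only if $x$ and $y$ lie in different components of $G$ with $e$ removed, i.e.\ in different parts of $p(e)$. So for every edge $e$, the set of indices $p\in\Z_{nk}$ counted in the definition of a multi-tour is literally the set counted in the statement, and the two ``exactly $2k$'' conditions coincide.

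The paper gives no proof of this proposition; it quotes it from Eremin's paper (Proposition~2.4 there). Your argument is the standard reasoning behind that result.

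Two side remarks. You are right to read $M_k=G_k\cup M$ as $M_k=G_k\cap M$. Also, neither the binarity of $G$ nor the condition $\pi(p)\ne\pi(p+1)$ is actually used.
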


Proposition~\ref{prop:mult_tour_edges} implies that multi-tours can be defined by suitable sets of boundary paths. Denote by $\Gamma(\pi)$ the family of boundary paths corresponding to the multi-tour $\pi$ (taking into account possible multiplicities). Let the numbering of points in $M$ be fixed, $M=\{m_1,\ldots,m_n\}$. Then the component $w^{ij}$ of the multi-tour vector $\bw^\pi$ is equal to the number of occurrences in the family $\Gamma(\pi)$ of the boundary path connecting $m_i$ and $m_j$.

We will also need the following statement~\cite[Proposition~2.5]{Eremin}.

\begin{prop}\label{prop:mult_tour_edges_pairs}
Let $\pi$ be a multi-tour of multiplicity $k$ of a binary tree $G$, and $v$ be a vertex of degree three in $G$. Denote by $e_1$, $e_2$, and $e_3$ the edges incident to $v$. Then for every pair $\{e_1,e_2\}$, $\{e_1,e_3\}$, $\{e_2,e_3\}$ of edges there are exactly $k$ boundary paths in $\Gamma(\pi)$ passing through it. In particular, if vertices $m$ and $m'$ of degree $1$ form moustaches of $G$, then $\Gamma(\pi)$ contains exactly $k$ boundary paths connecting $m$ and $m'$.
\end{prop}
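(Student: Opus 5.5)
The plan is to count boundary paths through the vertex $v$ and apply Proposition~\ref{prop:mult_tour_edges} to each of the three edges $e_1,e_2,e_3$. This gives a linear system whose only solution is the claimed one.

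First, I will show that every boundary path in $\Gamma(\pi)$ that contains one of the edges $e_i$ contains exactly two of the edges $e_1,e_2,e_3$. A boundary path $\g_{\pi(p)\pi(p+1)}$ is the unique path in the tree $G$ between two boundary vertices. Since $v$ has degree three, $v\notin M$, so $v$ is never an endpoint of such a path. Hence, if the path contains $e_i$, then $v$ is an interior vertex of it, and the path enters and leaves $v$ along two different edges incident to $v$. Because a path visits each vertex at most once, it uses exactly two of $e_1,e_2,e_3$. Conversely, a path avoiding $v$ uses none of them.

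Next, let $a_{ij}$, for $i\neq j$, be the number of boundary paths in $\Gamma(\pi)$, counted with multiplicity, that pass through both $e_i$ and $e_j$. By the first step, the number of boundary paths through $e_1$ is $a_{12}+a_{13}$, and similarly for $e_2$ and $e_3$. Proposition~\ref{prop:mult_tour_edges} says each of these numbers equals $2k$:
$$
a_{12}+a_{13}=2k,\qquad a_{12}+a_{23}=2k,\qquad a_{13}+a_{23}=2k.
$$
Subtracting the equations pairwise gives $a_{12}=a_{13}=a_{23}$, and then each common value equals $k$. This proves the main claim.

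For the moustache statement, let $m,m'\in M$ be adjacent to a common vertex $v$. Then $v$ has degree three, and we may take $e_1=\{v,m\}$ and $e_2=\{v,m'\}$. Since $m$ has degree $1$, any path containing $e_1$ must end at $m$, and likewise any path containing $e_2$ must end at $m'$. So a boundary path through both $e_1$ and $e_2$ is exactly the path $m,v,m'$, that is, $\g_{mm'}$. Conversely, $\g_{mm'}$ passes through both edges. Therefore the number of boundary paths in $\Gamma(\pi)$ connecting $m$ and $m'$ equals $a_{12}=k$.

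The only delicate point is the first step: we need that $v$ is never an endpoint of a boundary path (since $v\notin M$) and that tree paths pass through $v$ at most once. Everything after that is elementary linear algebra.
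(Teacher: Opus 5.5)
Your proof is correct and complete. The key observation is that a boundary path through one of the edges at $v$ uses exactly two of $e_1,e_2,e_3$, because $v\notin M$ and a tree path visits $v$ at most once. Combined with Proposition~\ref{prop:mult_tour_edges}, this yields the system $a_{12}+a_{13}=a_{12}+a_{23}=a_{13}+a_{23}=2k$, whose unique solution is $a_{12}=a_{13}=a_{23}=k$. Your reduction of the moustache case to $a_{12}$, using the degree-one endpoints, is also sound.

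The paper gives no proof of this statement; it simply quotes it from Eremin's work (Proposition~2.5 there). Your argument therefore supplies a self-contained derivation from Proposition~\ref{prop:mult_tour_edges}, which is the natural route for this fact.
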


Notice that different multi-tours can have the same multi-tours vectors. For example, if $\pi\:\Z_{nk}\to M$ is a multi-tour, then for each $p\in\Z_{nk}$, the mapping $\pi_{+p}\:\Z_{nk}\to M$, defined by the formula $\pi_{+p}(i)=\pi(i+p)$, defines a multi-tour with the same multi-tour vector. We say that $\pi_{+p}$ is obtained from $\pi$ \emph{by shifting the origin by $p$}.

Two multi-tours $\a$ and $\b$ are called \emph{equivalent\/} if $\bw^\a=\bw^\b$. The equivalence class of a multi-tour $\pi$ is denoted by $[\pi]$. Equivalent multi-tours obviously have the same multiplicity and equal semiperimeters. Using vector addition and multiplication, we can define similar operations on equivalence classes of multi-tours as follows. Let $\a$ and $\b$ be multi-tours of multiplicities $k$ and $l$, respectively. The \emph{sum\/} $[\a]+[\b]$ is the equivalence class of any multi-tour $\pi$ of multiplicity $k+l$ for which the equality $\bw^\pi=\bw^{\a}+\bw^{\b}$ holds. As such a multi-tour $\pi$, we can take the multi-tour obtained by ``sequential passing'' of the multi-tours $\a$ and $\b$. More formally, shifting the origins of the multi-tours if necessary, we replace $\a$ and $\b$ with equivalent ones so that $\a(0)=\b(0)$, after which we set $\pi(i)=\a(i)$ for $0\le i< nk$ and $\pi(i)=\b(i-nk)$ for $nk\le i<n(k+l)$. Multiplication by a positive integer is defined inductively via addition: $1[\pi]=[\pi]$, $2[\pi]=[\pi]+[\pi]$, $m[\pi]=[\pi]+(m-1)[\pi]$. Obviously, when multiplying a multi-tour by $m$, its multiplicity is also multiplied by $m$.

In the formula from the Assertion~\ref{ass:Eremin}, the maximum is taken over the infinite set of all multi-tours of the tree $G$. It turns out that it suffices to consider a subset of the so-called irreducible multi-tours. A multi-tours $\mu$ is called \emph{irreducible\/} if the equality $m[\mu]=[\pi_1]+[\pi_2]$ for some multi-tours $\pi_1$ and $\pi_2$ and a positive integer $m$ implies that $[\pi_1]=k[\mu]$ and $[\pi_2]=(m-k)[\mu]$ for some $k$, $k\le m$. In the paper~\cite{Eremin}, it is shown that the set of irreducible multi-tours of a fixed tree $G$ is finite, and an upper bound is given for their possible number.

\subsection{Generalized Fillings, Linear Programming, and Tree's Polyhedron}
The problem of finding a minimal parametric generalized filling reduces~\cite{IT_MSb} to a linear programming problem. Indeed, let, as above, $M=\{m_1,\ldots,m_n\}$ be a finite metric space whose points are numbered, and let a binary tree $G=(V,E)$ connecting $M$ be fixed. The conditions on the weight function $\om\:E\to\R$, which transforms the tree $G$ into a generalized filling of the space $M$, have the form of a system of $N=n(n-1)/2$ linear inequalities: for each pair $\{i,j\}$, $1\le i,\,j\le n$, the inequality
$$
\sum_{e\in\g_{m_im_j}}\om(e)\ge|m_im_j|,
$$
must hold where, as above, $\g_{m_im_j}$ denotes the unique path in the tree $G$ connecting its vertices $m_i$ and $m_j$. To find the minimal parametric generalized filling it is required to minimize the linear function $\sum_{e\in E}\om(e)$, where the summation is taken over all edges of the tree $G$, under these conditions. We obtain the so-called \emph{General Linear Programming Problem}, see, for example,~\cite{VasIv}. Note that the variables are the edge weights, which can be viewed as components of a $(2n-3)$-dimensional vector.
 
A.~Ivanov and A.~Tuzhilin~\cite{IT_LP} proposed to consider an equivalent dual problem of linear programming. They obtained a \emph{canonical linear programming problem\/} on the space $\R^N$, that is, the set of admissible values of variables is defined by a system of linear equations and the condition of non-negativity of all variables, see~\cite{VasIv}. We numerate the components of the vector of variables $\bx\in\R^N$ by pairs $\{i,j\}$, $1\le i,\,j\le n$, which we write for brevity as $ij$. The matrix $A_G$ of the system of equations has size $(2n-3)\times N$ and is the \emph{cut matrix\/} of the complete graph $K_n(M)$ generated by the edges of the tree $G$. Namely, the partition $p(e)=M_1\sqcup M_2$ of the set $M$ generated by the edge $e$ of the tree $G$ defines a cut of the graph $K_n(M)$, and the element $a_{ij}^e$ of the matrix $A_G$ is equal to $1$ if the vertices $m_i$ and $m_j$ of the graph $K_n(M)$ lie in different sets $M_1$ and $M_2$, and is equal to $0$ otherwise. The set $W_G$ of admissible values of the dual problem is defined by the system of equations and inequalities
$$
A_G\bx=\one_{2n-3},\qquad \bx\ge0,
$$
where $\one_k\in\R^k$ denotes the column vector of all ones. The dual problem is to maximize the function $g_M(\bx)=\sum_{ij}x^{ij}|m_im_j|$ on $W_G$. Note that $W_G$ is independent of the metric on $M$ and is completely determined by the tree $G$ connecting $M$, while the function $g_M$, on the contrary, depends only on the metric on $M$. In the paper~\cite{IT_LP} it is shown that the matrix $A_G$ is non-degenerated, and $W_G$ is a non-empty convex polyhedron, so to find the maximum it suffices to enumerate the values of the function $g_M$ at its vertices. In~\cite{IT_LP} it is shown that the rational points of $W_G$ correspond to multi-tours of the tree $G$, and the following result holds.

\begin{prop}\label{prop:m_tour_vec}
For any multi-tour $\sigma$ of multiplicity $k$ of a binary tree $G$, the equality $A_G\bw^\sigma=2k\one_{2n-3}$ holds.
\end{prop}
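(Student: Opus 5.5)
The plan is to compute the product $A_G\bw^\sigma$ one coordinate at a time, with one coordinate for each edge $e$ of $G$. Then read off the value $2k$ from the definition of a multi-tour, or equivalently from Proposition~\ref{prop:mult_tour_edges}. No linear-programming machinery is needed; this is a double-counting identity.

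Fix an edge $e$ of $G$ with partition $p(e)=M_1\sqcup M_2$. By definition of the cut matrix, the $e$-th coordinate of $A_G\bw^\sigma$ is
$$
\sum_{ij} a^e_{ij}\,w^{ij}=\sum_{\{i,j\}\colon m_i,m_j\text{ separated by }p(e)} w^{ij}.
$$
Recall that $w^{ij}$ is the number of indices $p\in\Z_{nk}$ with $\{\sigma(p),\sigma(p+1)\}=\{m_i,m_j\}$. Regroup the sum by the index $p$ instead of by the pair $\{i,j\}$. This is legitimate because $\sigma(p)\ne\sigma(p+1)$, so each $p$ contributes to exactly one pair. The sum then equals the number of $p\in\Z_{nk}$ such that $\sigma(p)$ and $\sigma(p+1)$ lie in different parts of $p(e)$.

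By the definition of a multi-tour of multiplicity $k$ of $G$, this number is exactly $2k$. Alternatively, in the language of Proposition~\ref{prop:mult_tour_edges}: the boundary path $\g_{\sigma(p)\sigma(p+1)}$ in the tree passes through $e$ if and only if its endpoints lie in different components of $G$ with $e$ removed, that is, in different parts of $p(e)$. So the count is the number of boundary paths through $e$, which is $2k$.

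Since $e$ was arbitrary, every one of the $2n-3$ coordinates equals $2k$, which gives $A_G\bw^\sigma=2k\one_{2n-3}$. The only point needing care is the bookkeeping of multiplicities when switching from summing over pairs $\{i,j\}$ to summing over positions $p$ in the cyclic route. I do not expect any genuine obstacle.
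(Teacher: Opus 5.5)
Your proof is correct. The paper gives no proof of this proposition: it quotes the result from \cite{IT_LP}, so there is no in-paper argument to compare against. Your double count is a complete, self-contained verification straight from the definitions. The $e$-th coordinate of $A_G\bw^\sigma$ counts the positions $p\in\Z_{nk}$ at which the cyclic route $c_\sigma$ crosses the cut $p(e)$. Regrouping the sum by positions is valid because $\sigma(p)\ne\sigma(p+1)$, so each position contributes exactly once, to a single column $ij$. With the paper's definition of a multi-tour, phrased in terms of consecutive points lying in different parts of $p(e)$, that count is $2k$ by definition. Your appeal to Proposition~\ref{prop:mult_tour_edges} is therefore unnecessary. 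It would only be needed if multi-tours were defined through boundary paths passing through edges, and your remark that a tree path passes through $e$ exactly when its endpoints are separated by $p(e)$ covers that version as well.
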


Also in~\cite{IT_LP}, an estimate for the multiplicity of multi-tours corresponding to the vertices of the polytope $W_G$ was obtained, improving the estimate in~\cite{Eremin}.

We will need a well-known result describing all the vertices of the polytopes of admissible values of the variables in a canonical linear programming problem; see, for example~\cite{VasIv}. Recall that a point $\bx$ is called a \emph{vertex}, or an \emph{extremal point}, or an \emph{corner point\/} of a closed subset $X\subset \R^n$ if $\bx$ does not lie inside any interval with endpoints in $X$, that is, more formally, from the existence of points $\bx_0$ and $\bx_1$ in $X$ and a real $t\in(0,1)$ for which the equality $\bx=(1-t)\bx_0+t\bx_1$ holds, it follows that $\bx_0=\bx_1=\bx$.

\begin{prop}\label{prop:Lin_Prog_Vert}
Let $A$ be a non-degenerated $m\times n$ matrix, $1\le m\le n$, and let the subset $X\subset \R^n$ be defined by a system of equations and inequalities
$$
A\bx=\bb,\qquad \bx\ge0.
$$
A vector $\bx=(x^1,\ldots,x^n)\in\R^n$ is a corner point of a subset $X$ if and only if $x^j\ge0$ for all $j=1,\ldots,n$, there exist $m$ linearly independent columns $A_{j_1},\ldots,A_{j_m}$ of the matrix $A$ such that
$$
A_{j_1}x^{j_1}+\cdots+A_{j_m}x^{j_m}=\bb,
$$
and $x^i=0$ for all $i\not\in\{j_1,\ldots,j_m\}$.
\end{prop}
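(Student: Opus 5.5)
The proof is the standard argument through the support of a point. For $\bx\in X$ put $S(\bx)=\{j: x^j>0\}$. I will show that $\bx$ is a corner point of $X$ if and only if the columns $A_j$, $j\in S(\bx)$, are linearly independent. The statement of Proposition~\ref{prop:Lin_Prog_Vert} then follows by extending this independent family to $m$ independent columns. I read ``non-degenerated'' as $\operatorname{rank}A=m$, i.e.\ $A$ has $m$ linearly independent columns.

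\emph{Sufficiency.} Let $\bx\ge0$, and let $A_{j_1},\ldots,A_{j_m}$ be independent columns with $\sum_s A_{j_s}x^{j_s}=\bb$ and $x^i=0$ for $i\notin J=\{j_1,\ldots,j_m\}$. Then $A\bx=\bb$, so $\bx\in X$. Suppose $\bx=(1-t)\bx_0+t\bx_1$ with $\bx_0,\bx_1\in X$ and $t\in(0,1)$.
\begin{itemize}
\item For $i\notin J$ we have $0=(1-t)x_0^i+tx_1^i$ with both summands nonnegative. Hence $x_0^i=x_1^i=0$.
\item Consequently $\sum_s A_{j_s}x_0^{j_s}=\bb=\sum_s A_{j_s}x^{j_s}$. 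Linear independence of the columns gives $x_0^{j_s}=x^{j_s}$ for all $s$, so $\bx_0=\bx$.
\item The same argument gives $\bx_1=\bx$.
\end{itemize}
So $\bx$ is a corner point.

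\emph{Necessity.} Let $\bx$ be a corner point of $X$, and suppose the columns $A_j$, $j\in S(\bx)$, are linearly dependent.
\begin{itemize}
\item Choose a nonzero vector $\boldsymbol d$ with $d^j=0$ for $j\notin S(\bx)$ and $A\boldsymbol d=0$.
\item Since $x^j>0$ on $S(\bx)$, there is $\varepsilon>0$ with $\bx\pm\varepsilon\boldsymbol d\ge0$.
\item Both points satisfy $A(\bx\pm\varepsilon\boldsymbol d)=\bb$, so they lie in $X$.
\item Then $\bx=\tfrac12(\bx+\varepsilon\boldsymbol d)+\tfrac12(\bx-\varepsilon\boldsymbol d)$ with distinct endpoints. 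This contradicts extremality.
\end{itemize}
Hence the columns indexed by $S(\bx)$ are independent, and in particular $|S(\bx)|\le m$. By the Steinitz exchange lemma, since $\operatorname{rank}A=m$, this independent family extends to $m$ linearly independent columns $A_{j_1},\ldots,A_{j_m}$. For the added indices $x^{j}=0$, so $\sum_s A_{j_s}x^{j_s}=A\bx=\bb$. Also $x^i=0$ for every $i$ outside the chosen set, since such $i$ lies outside $S(\bx)$.

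The only step needing care is the extension to exactly $m$ columns. This is where non-degeneracy of $A$ is used: it guarantees that $m$ independent columns exist, so the exchange lemma applies. Everything else is direct linear algebra.
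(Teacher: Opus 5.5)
Your proof is correct. The sufficiency direction forces the off-basis coordinates of $\bx_0,\bx_1$ to vanish and then uses independence of $A_{j_1},\ldots,A_{j_m}$. The necessity direction perturbs to $\bx\pm\varepsilon\boldsymbol d$ along a kernel vector supported on $S(\bx)$. The final extension to exactly $m$ columns uses $\operatorname{rank}A=m$, which matches how ``non-degenerated'' is used in the paper for the $(2n-3)\times N$ matrix $A_G$.

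The paper gives no proof of its own here. It quotes the proposition as a well-known linear programming fact from the textbook it cites, and your support-based argument is the standard textbook proof of that fact. One point is worth stating explicitly: a corner point is by definition a point of $X$. That is what gives you $\bx\ge0$, $A\bx=\bb$ and $x^j=0$ off $S(\bx)$ in the necessity direction.
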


Using Proposition~\ref{prop:Lin_Prog_Vert}, explicit weight formulas for minimal parametric generalized fillings of type $G$ were written in~\cite{IT_LP} as the maximum of the objective function values at the vertices of the polytope $W_G$ for metric spaces consisting of at most $6$ points. Note that the cases $n\ge 4$ were described in~\cite{IT_MSb}, and the case $n=5$ is considered in~\cite{BBed}. Later, O.~Shcherbakov noted~\cite{Shch1} that the vertices of the polytope $W_G$ correspond exactly to irreducible multi-tours of the tree $G$, that is, the weight formulas from~\cite{Eremin} and~\cite{IT_LP} coincide. Moreover, in the same paper~\cite{Shch1} it is noted that the maximum can be attained at any vertex of the polytope $W_G$ (that is, it is possible to choose a suitable metric space); in other words, it is generally impossible to reduce the enumeration of vertex (of irreducible multi-tours). Therefore, it is of interest to study the set of all vertices of the polytopes $W_G$.

\section[Matrix of all $2$-cuts and its polytop]{Matrix of all $2$-cuts and its polytop and trees' polyhedra}
We will need the following construction. Let $M=\{m_1,\ldots,m_n\}$ be a finite set of $n$ elements, and, as above, denote by $K_n(M)$ the complete graph with vertex set $M$. Consider the matrix $B$ of all $2$-cuts of $K_n(M)$, that is, $B$ is a matrix of size $C\times N$, where $N=n(n-1)/2$ is the number of edges of $K_n(M)$, and $C=2^{n-1}-1$ is the number of partitions of the $n$-element set $M=\{m_1,\ldots,m_n\}$ into two non-empty disjoint subsets (such partitions are called \emph{$2$-cuts\/}). The rows of the matrix $B$ correspond to partitions of the set $M$, the columns correspond to edges $\{m_i,m_j\}$ of the graph $K_n(M)$, numbered, as above, by pairs $ij$. We denote the elements of the matrix $B$ by $b_{ij}^p$, where $ij$ is an edge of the graph $K_n(M)$, and $p$ is a partition of the set $M$. If $p=\{M_1,M_2\}$ is a partition of the set $M$, that is, $M=M_1\sqcup M_2$, then $b_{ij}^p=1$ if $m_i$ and $m_j$ belong to different subsets of the partition, that is, $\big|\{m_i,m_j\}\cap M_k\big|=1$, $k=1,\,2$, and $b_{ij}^p=0$ otherwise. Denote by $T_n\subset\R^N$ a convex polyhedral subset defined in $R^N$ by the following system of inequalities: $B\bx\ge\one_C$, $\bx\ge0$, where $\bx\in\R^N$.

\begin{rk}
For any binary tree $G$ connecting $M$, the matrix $A_G$ is a submatrix of the matrix $B$.
\end{rk}

\begin{ass}\label{ass:subset}
For an arbitrary binary tree $G$ with boundary $M$, $|M|=n$, the inclusion $W_G\subset T_n$ holds.
\end{ass}

\begin{proof}
Since $W_G$ and $T_n$ are convex, it suffices to verify that all vertices of $W_G$ lie in $T_n$. Let $\bv\in W_G$ be an arbitrary vertex of $W_G$. It corresponds to some irreducible multi-tour $\sigma$ of the tree $G$ of some multiplicity $k$; namely, the vector $2k\bv$ is a multi-tour vector, and therefore $\bv\ge0$ and $A_G\bv=\one_{2n-3}$, that is, the inequalities of the system $B\bv\ge\one_C$ corresponding to the rows of the matrix $A_G$, hold as equalities.

Now consider a row of the matrix $B$ that is not a row of the matrix $A_G$. This row corresponds to some partition $M=M_1\sqcup M_2$ of the set $M$. We need to show that at least $2k$ boundary paths of the multi-tour $\sigma$ connect vertices from different $M_1$ and $M_2$. We use induction on the number $n$ of boundary vertices in $G$. In the trivial case $n=2$, the statement is obvious. Now let $n\ge3$, and let some partition $M=M_1\sqcup M_2$ be fixed.

First, suppose that a tree $G$ has moustaches, both vertices $m$ and $m'$ of degree $1$ of which belong to the same subset, say $M_1$. We reconstruct the tree by discarding these moustaches (both the pair of vertices and the pair of edges), denote the resulting tree by $G'$, and the new vertex of degree $1$ by $m''$. The set $M'$ of vertices of degree $1$ of $G'$ has the form $M'=\{m''\}\cup M\setminus\{m,m'\}$. Consider its partition $M'=M'_1\sqcup M'_2$, where $M'_1=\{m''\}\cup M_1\setminus\{m,m'\}$ and $M'_2=M_2$. A multi-tour $\sigma$ of $G$ generates a multi-tour of $G'$ as follows: delete all boundary paths $\g(m,m')$, then in all boundary paths incident with $m$ or $m'$, replace these vertices with $m''$, leaving the remaining boundary paths unchanged. This yields a multi-tour $\sigma'$ of $G'$ of the same multiplicity $k$. By the induction hypothesis applied to $G'$ and $\sigma'$, the multi-tour $\sigma'$ has at least $2k$ boundary paths connecting vertices from different sets $M'_1$ and $M'_2$. If such a path is not incident with $m''$, then it is also a boundary path in the multi-tour $\sigma$ connecting the same vertices in $G$. Otherwise, to obtain a boundary path from the original multi-tour $\sigma$, we must replace $m''$ with $m'$ or $m$. In both cases, we obtain a boundary path from $\sigma$ with ends in different sets $M_1$ and $M_2$. Thus, in the case under consideration, the number of boundary paths in $\sigma$ connecting vertices from different sets is at least $2k$. Note that for $n=3$, one of the tree's moustaches always belongs to the same subset, i.e., the case $n=3$ is done.

Now suppose that $n\ge 4$, and the pairs of vertices of all moustaches of the tree $G$ belong to different subsets. But then for each pair $m$, $m'$ of such vertices, the multi-tour $\sigma$ includes $k$ boundary paths connecting $m$ and $m'$ (Proposition~\ref{prop:mult_tour_edges_pairs}). Since for $n\ge 4$ the tree has at least two pairs of disjoint moustaches, in this case the number of boundary paths in $\sigma$ connecting vertices from different subsets is at least $2k$. The statement is proved.
\end{proof}

\begin{ass}\label{ass:vertices}
Let $G$ be a binary tree with boundary $M$. Each vertex of $W_G$ is a vertex of $T_n$.
\end{ass}

\begin{proof}
Let us pass from the system of inequalities $B\bx\ge1$ to a system of equations by adding auxiliary non-negative variables $\by\in\R^C$ and passing to the extended block matrix $\hB=(B,-E_C)$, where $E_C$ denotes the identity matrix of size $C\times C$. Consider a convex polyhedral set $\hT_n\subset\R^{C+N}$ defined by the following system of equations and inequalities: $\hB(\bx,\by)=\one_{C}$, $\bx\ge0$, $\by\ge0$. The following result is widely used in linear programming when passing from General LP Problem to Classical LP Problem.

\begin{lem}\label{lem:clp_glp}
If $\bx\in\R^N$ belongs to the polyhedron $T_n$ defined by the system of inequalities $B\bx\ge\one_C$, $\bx\ge0$, then $(\bx,\by)$, where $\by=B\bx -\one_C$, belongs to $\hT_n$, that is, $\hB(\bx,\by)=\one_C$, $\bx\ge0$ and $\by\ge0$. Conversely, if $(\bx,\by)\in\R^{N+C}$ belongs to the polyhedron $\hT_n$ defined by the system of equations and inequalities $\hB(\bx,\by)=\one_C$, $\bx\ge0$ and $\by\ge0$, then $\by=B\bx -\one_C$ and $\bx\in T_n$, that is $B\bx\ge\one_C$, $\bx\ge0$. The projection map $\pi\:\hT_n\to T_n$, $(\bx,\by)\mapsto\bx$ is one-to-one, and the inverse map is $\bx\mapsto(\bx,B\bx-\one_C)$. Moreover, $\bx^*$ is a corner point of $T_n$ if and only if $(\bx^*,\by^*)$, $\by^*=B\bx^*-\one_C$, is a corner point of $\hT_n$.
\end{lem}

\begin{proof}
If $\bx\in T_n$, then $\bx\ge0$ and $\by=B\bx -\one_C\ge0$ by definition of the polyhedron $T_n$, therefore $\hB(\bx,\by)=B\bx-\by=\one_C$, whence $(\bx,\by)\in\hT_n$. Conversely, if $(\bx,\by)\in\hT_n$, then $\bx\ge0$, $\by\ge0$ and $\hB(\bx,\by)=B\bx-\by=\one_C$, whence $B\bx-\one_C=\by\ge0$, that is, $B\bx\ge\one_C$, and, therefore, $\bx\in T_n$.

Further, from the already proven first two statements of Lemma it follows that for any point $(\bx,\by)\in\hT_n$ the inclusion $\bx\in T_n$ holds, and conversely, $(\bx,B\bx-\one_C)\in\hT_n$ holds for each $\bx\in T_n$, and for any point $(\bx,\by)\in\hT_n$ the equality $\by=B\bx-\one_C$ holds, that is, $\by$ is uniquely determined by $\bx$, which means that the projection $\pi$ is bijective.

Now let $\bx^*$ be a corner point of $T_n$, and let $(\bx^*,\by^*)$, where $\by^*=B\bx^*-\one_C$, be not a corner point for $\hT_n$. The latter means that in $\hT_n$ there exist two distinct points $\bp_0=(\bx_0,\by_0)$ and $\bp_1=(\bx_1,\by_1)$ such that $(\bx^*,\by^*)=(1-t)\bp_0+t\bp_1$ for some $t\in(0,1)$. But then $\bx^*=(1-t)\bx_0+t\bx_1$ for the same value of $t$, and $\bx_0$ and $\bx_1$, as we have already proved, belong to $T_n$, therefore $\bx^*$ is not a corner point of $T_n$, a contradiction. Conversely, let $(\bx^*,\by^*)$ be a corner point of $\hT_n$, and let $\bx^*$ not be a corner point of $T_n$. Then there exist points $\bx_0$ and $\bx_1$ in $T_n$ for which $\bx^*=(1-t)\bx_0+t\bx_1$ for some $t\in(0,1)$. Let's put $\by_0=B\bx_0-\one_C$ and $\by_1=B\bx_1-\one_C$. Then $\by_0\ge0$ and $\hB(\bx_0,\by_0)=\one_C$, so $\bp_0=(\bx_0,\by_0)\in\hT_n$, and, in the same way, $\bp_1=(\bx_1,\by_1)\in\hT_n$. Besides,
\begin{multline*}
(1-t)\by_0+t\by_1=(1-t)(B\bx_0-\one_C)+t(B\bx_1-\one_C)=\\
=B\big((1-t)\bx_0+t\bx_1\big)-\one_C=B\bx^*-\one_C=\by^*,
\end{multline*}
from where $(\bx^*,\by^*)=(1-t)\bp_0+t\bp_1$ for the same value of $t\in(0,1)$, that is, $(\bx^*,\by^*)$ is not a corner point for $\hT_n$, a contradiction.
\end{proof}

Thus, let $\bv$ be a vertex of the polyhedron $W_G$. By Proposition~\ref{prop:Lin_Prog_Vert} this means that the matrix $A=A_G$ has a non-singular $(2n-3)\times(2n-3)$ submatrix $A_{j_1\ldots j_{2n-3}}$ generated by columns with numbers $j_1,\ldots,j_{2n-3}$, for which the equality $A_{j_1\ldots j_{2n-3}}\bv^{j_1\ldots j_{2n-3}}=\one_{2n-3}$ holds, where $\bv^{j_1\ldots j_k}\in\R^k$ denotes the vector formed by the components of the vector $\bv$ with numbers $j_1,\ldots,j_k$, and, moreover, all components of the vector $\bv$ are non-negative, and the components with numbers, different from $j_1,\ldots, j_{2n-3}$ are equal to zero.

Consider a polyhedral set $\hT_n$ and construct its vertex $(\bx,\by)$ corresponding to $\bv$. To do this, we set $\bx=\bv$, then $\bx\in T_n$ by Assertion~\ref{ass:subset}, therefore $B\bx\ge\one_C$. We set $\by=B\bx-\one_C\ge0$. Recall that the matrix $A_G$ is a submatrix of the matrix $B$, therefore the inequalities from the system $B\bx\ge\one_{C}$ corresponding to the rows of $A_G$ are satisfied in the form of equalities, and hence the corresponding components $\by$ are equal to zero. If necessary, we permute the rows of the matrix $\hB$ so that the first $(2n-3)$ rows of its submatrix $B$ become rows of the matrix $A_G$. Consider a $C\times C$ submatrix $\hB_C$ of $\hB$ generated by columns $j_1,\ldots,j_{2n-3}$ of its submatrix $B$ and $C-(2n-3)$ columns of the matrix $-E_C$ with numbers greater than $2n-3$ (we assume that the rows and columns of a square matrix $-E_C$ are numbered starting from $1$). This submatrix has a block form: the diagonal contains the non-singular matrix $A_{j_1\ldots j_{2n-3}}$ and the scalar matrix $-E_{C-(2n-3)}$, and the block above the diagonal consists of zeros. Therefore, $\hB_C$ is non-singular. It remains to note that all nonzero components of the constructed vector $(\bx,\by)$ correspond to columns of the matrix $\hB_C$, therefore $(\bx,\by)$ is a vertex of $\hT_n$ by Proposition~\ref{prop:Lin_Prog_Vert}, and, by Lemma~\ref{lem:clp_glp}, $\bx=\bv$ is a vertex of $T_n$. The statement is proved.
\end{proof}

So, we are ready to prove the following Theorem.

\begin{thh}\label{thm:all_conv}
Consider the family of all binary trees connecting a finite set $M$, $|M|=n$. For each such tree $G$, construct a polyhedron $W_G$, denote by $V_G\subset\R^N$ the set of its vertices, and let $V=\cup_GV_G\subset\R^N$. Then the set $V$ is extremal, that is, $V$ is the set of vertices of its convex hull $\conv V\subset\R^N$.
\end{thh}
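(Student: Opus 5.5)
The plan is to deduce the theorem from Assertion~\ref{ass:subset} and Assertion~\ref{ass:vertices} using only elementary convexity. The key observation is that all the sets $V_G$ lie inside the single polyhedron $T_n$, and each of their points is a corner point of $T_n$. A corner point of a convex set cannot be written as a nontrivial convex combination of other points of that set. The same then holds inside any subset that contains it, in particular inside $V$.

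First, by Assertion~\ref{ass:vertices}, every $\bv\in V_G$ is a vertex of $T_n$. Since this holds for every binary tree $G$ with boundary $M$, the whole set $V=\cup_G V_G$ consists of corner points of $T_n$. Second, by Assertion~\ref{ass:subset}, each $W_G\subset T_n$, so $V\subset T_n$. Because $T_n$ is convex, it follows that $\conv V\subset T_n$. Note also that $V$ is finite: there are finitely many binary trees with boundary $M$, and each $W_G$ has finitely many vertices by Proposition~\ref{prop:Lin_Prog_Vert}. Hence $\conv V$ is a polytope whose vertex set is contained in $V$.

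Third, I show that every $\bv\in V$ is a vertex of $\conv V$. Suppose $\bv=(1-t)\bx_0+t\bx_1$ with $\bx_0,\bx_1\in\conv V$ and $t\in(0,1)$. Since $\conv V\subset T_n$, both $\bx_0$ and $\bx_1$ lie in $T_n$. Since $\bv$ is a corner point of $T_n$, this forces $\bx_0=\bx_1=\bv$. So $\bv$ is a corner point of $\conv V$.

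Conversely, every corner point of $\conv V$ lies in $V$. Indeed, any point of $\conv V$ is a finite convex combination of points of $V$. If a corner point $\bx$ were such a combination with at least two distinct points of $V$ having positive coefficients, then splitting off one of these points would exhibit $\bx$ as an interior point of a segment in $\conv V$, which is impossible. Together with the previous step, this shows that the vertex set of $\conv V$ is exactly $V$.

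I do not expect a real obstacle here: all the substantive work is already done in Assertion~\ref{ass:subset} and Assertion~\ref{ass:vertices}. The only point needing care is the standard fact that the extreme points of the convex hull of a finite set lie in that set, which the argument in the previous paragraph handles directly.
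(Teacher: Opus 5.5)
Your proposal is correct and matches the paper's proof. Assertion~\ref{ass:subset} gives $V\subset T_n$ and hence $\conv V\subset T_n$, and Assertion~\ref{ass:vertices} makes each point of $V$ a corner point of $T_n$, so it is a corner point of the smaller set $\conv V$. You also spell out the reverse inclusion, that every corner point of $\conv V$ lies in $V$, which the paper leaves implicit; there the corner-point definition forces the split-off point of $V$ to equal $\bx$, so you get $\bx\in V$ directly rather than a contradiction, and finiteness of $V$ is not needed.
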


\begin{proof}
By Assertion~\ref{ass:subset}, every set $V_G$, and hence the entire set $V$, is contained in a convex polyhedral set $T_n$, so the convex hull of $\conv V$ is contained in $T_n$. By Proposition~\ref{ass:vertices}, every point in $V$ is also a vertex for $T_n$, so $v$ is also a vertex for $\conv V$, as required.
\end{proof}

\subsection*{Acknowledgements}
The work of A.\,O.~Ivanov was partly supported by Moscow Mathematical Center for Fundamental and Applied Mathematics (Agreement No.~075--15--2025--345) and by Sino-Russian Mathematical Center at Peking University.

The work of A.\,A.~Tuzhilin was supported by grant No.~25--21--00152 of the Russian Science Foundation, by National Key R\&D Program of China (Grant No.~2020YFE0204200), as well as by the Sino-Russian Mathematical Center at Peking University. Partial of the work by A.\,A.~Tuzhilin were done in Sino-Russian Math. center, and he thanks the Math. Center for the invitation and the hospitality.


\end{document}